\newtheorem{theorem}{Theorem}[section]
\newtheorem{lemma}[theorem]{Lemma}
\newtheorem{proposition}[theorem]{Proposition}
\newtheorem{question}[theorem]{Question}
\theoremstyle{definition}
\newtheorem{definition}[theorem]{Definition}
\newtheorem{remark}[theorem]{Remark}
\newtheorem{claim}[theorem]{Claim}
\numberwithin{equation}{section}
\begin{document}

\author{Amey Kaloti}
\address{Department of Mathematics \\ University of Iowa}
\email{amey-kaloti@uiowa.edu}

\author{B\"{u}lent Tosun}
\address{Department of Mathematics \\ University of Virginia}
\email{bt5t@virginia.edu}

\title[Hyperbolic Rational Homology Spheres]{Hyperbolic Rational Homology Spheres not Admitting Fillable Contact Structures.}

\begin{abstract}
In this short note, we exhibit an infinite family of hyperbolic rational homology $3$--spheres which do not admit any fillable contact structures. We also note that most of these manifolds do admit tight contact structures. 
\end{abstract}
\maketitle

\section{Introduction}

An important question in low dimensional contact topology is whether a given closed, oriented, irreducible $3$-manifold admits a tight contact structure or not. Recall that an overtwisted disc in a $3$--manifold $M$ is an embedded disc $D$ in $M$ such that the $\partial D$ is Legendrian and the Thurston-Bennequin number, denoted $tb$ of $\partial D$ is exactly $0$. A manifold is called \textit{overtwisted} if it contains overtwisted disc, otherwise it is called \textit{tight}. An advance in this direction was made by Eliashberg and Thurston~\cite{Eliashberg_Thurston_Confoliation} following work of Gabai~\cite{Gabai_Taut_Foliation}. We recall this briefly. It follows from work Gabai~\cite{Gabai_Taut_Foliation} that, if $M$ is an irreducible $3$--manifold and $\Sigma$ is an oriented surface realizing a non-trivial homology class in $M$ and of minimal genus among representatives of its homology class, then there is a taut foliation $\xi$ of $M$ with $\Sigma$ as a leaf. Moreover, Gabai can construct these taut foliations on any irreducible $3$--manifold with $b_1(M) > 0$. Now it follows from work of Eliashberg-Thurston~\cite{Eliashberg_Thurston_Confoliation} that, with $M$ and $\xi$ as above, there is a fillable and hence tight (see discussion below) contact structure $\xi'$. Hence, we know that any irreducible $3$ manifold with nontrivial second homology admits a tight contact structure. Work of Honda-Kazez-Matic~\cite{HKM} (and independently of Colin \cite{Colin99}) provides a tight contact structure on toroidal $3$--manifolds (regardless of homologically essential condition). Also, it follows from the work of Lisca and Stipsciz~\cite{Lisca_Stipsciz_Classification_of_manifolds_admitting_tight_contact_Structures} that, a {\it small} Seifert fibered $3$--manifold admits a tight contact structure if and only if it is not a $2q-1$ surgery on $(2,2q+1)$ torus knot, for $q \geq 1$.

So this leaves us with the following open question.
\begin{question}
\label{hyperbolic_question}
Does every hyperbolic $3$--manifold which is a rational homology sphere admit a tight contact structure?
\end{question}

A related notion to tightness is that of a symplectic filling whose definition we now recall.
\begin{definition}
A contact manifold $(M,\xi)$ is \textit{weakly fillable} if $M$ is the oriented boundary of a symplectic manifold $(X,\omega)$ and $\omega|_{\xi} > 0$. 
\end{definition}

We remark here that there are notions of strong and Stein fillability which are known to be strictly stronger than the weak fillability that we have defined here (See~\cite{Eliashberg_Torus_Examples, Ghiggini_strong_not_Stein}). For the rest of the paper fillable will stand for weakly fillable.

It follows from a theorem of Eliashberg and Gromov that if a $(M,\xi)$ is fillable then it is tight. It is also  known that fillability is strictly stronger than tightness, see~\cite{Etnyre_Honda_nonfillable}. So we can refine the Question~\ref{hyperbolic_question} and ask: Does every hyperbolic $3$--manifold which is a rational homology sphere admit a fillable contact structure? We answer this question negatively.

\begin{theorem}
\label{main_theorem}

Let $M_{r,m}$ be a $3$--manifold obtained by performing a rational $r$--surgery along $P(-2,3,2m+1)$, $m\geq 3$ pretzel knot (see 
Figure~\ref{figure_pretzel_knot}), then

\begin{enumerate}
\item $M_{r,3}$ does not admit fillable contact structure for any $r \in [9,15]$.

\item For $m$ sufficiently large, there is a subinterval $[2m+3,s]\subset[2m+3,2m+\frac{\sqrt{16m+13}+9)}{2})$ such that $M_{r,m}$ does not admit fillable contact structure for any $r \in [2m+3,s]$. 

\end{enumerate}

Moreover, all of these manifolds do admit a tight contact structure for every $r\neq 2m+3$.
\end{theorem}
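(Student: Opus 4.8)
The plan is to prove the non-fillability statements (1) and (2) by a gauge-theoretic obstruction and to prove the final tightness assertion by a contact-surgery construction, so I would handle them separately. First I would record the properties of $K=P(-2,3,2m+1)$ that drive the whole argument: it is a fibered \emph{positive L-space knot} of Seifert genus $g=m+2$, so that $2g-1=2m+3$ is exactly the threshold slope appearing in the theorem, and for every $r\ge 2m+3$ the surgered manifold $M_{r,m}=S^3_r(K)$ is a rational homology sphere L-space. Since the knot Floer homology of an L-space knot is determined by its Alexander polynomial, I would compute the correction terms $d(M_{r,m},\mathfrak s)$ via the Ozsv\'ath--Szab\'o rational (mapping cone) surgery formula; these are the numerical input for the obstruction below.

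For non-fillability, suppose $(M_{r,m},\xi)$ is weakly fillable. As $M_{r,m}$ is a rational homology sphere I would first deform the weak filling to a strong one and cap it off to a closed symplectic $4$--manifold; because $M_{r,m}$ is an L-space, this forces the (minimal) filling $X$ to be negative definite, i.e. $b_2^+(X)=0$ --- otherwise a Seiberg--Witten basic class of the capped manifold would survive into $HF^+(M_{r,m})$ and contradict the L-space condition. Next I would exhibit an explicit negative-definite $4$--manifold $P$ with $\partial P=-M_{r,m}$, read off from a plumbing / continued-fraction surgery description of $-M_{r,m}$, and glue to form the closed, negative-definite manifold $Z=X\cup_{M_{r,m}}P$. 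Donaldson's diagonalization theorem then forces $Q_Z\cong\langle-1\rangle^{n}$, so the intersection lattice of $P$ must embed into the standard negative-definite diagonal lattice. The heart of the matter --- and the step I expect to be the main obstacle --- is the purely lattice-theoretic claim that \emph{no such embedding exists precisely when $r$ lies in the stated interval}: the upper endpoint $2m+\tfrac{\sqrt{16m+13}+9}{2}$ is exactly the slope beyond which enough room appears for the lattice to embed. Equivalently, one can phrase the obstruction through the correction-term inequality $c_1(\mathfrak s)^2+b_2^-(X)\le 4\,d(M_{r,m},\mathfrak s|_{M_{r,m}})$ for the negative-definite filling, played off against the $d$--invariants computed above; either way the bounded interval emerges from a finite number-theoretic computation, and making the endpoints sharp --- an explicit finite check for $m=3$ in part (1) versus the more delicate asymptotic estimate for general large $m$ in part (2) --- is where the real work lies.

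Finally, for the existence of tight contact structures I would argue by slope. For slopes below the L-space threshold the manifold is Stein (hence tight): realize the surgery as Legendrian surgery on a Legendrian representative of $K$ of maximal Thurston--Bennequin number $\overline{\tb}(K)$, stabilizing as needed. For the remaining slopes $r\neq 2m+3$, and in particular throughout the non-fillable window where the structure cannot be fillable, I would instead build a contact structure by a contact surgery involving a positive (contact $(+1)$) surgery and certify its tightness by showing the Ozsv\'ath--Szab\'o contact class is non-zero; tracking this class through the surgery exact triangle is feasible precisely because $M_{r,m}$ is an L-space, and one checks that it survives for every slope except $r=2m+3$, where the computation degenerates. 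Combined with a verification that the chosen slopes avoid the finitely many exceptional (non-hyperbolic) surgeries on $P(-2,3,2m+1)$, this produces the desired hyperbolic rational homology spheres that are tight but not fillable.
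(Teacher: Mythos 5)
Your high-level frame is the same as the paper's: for $r\ge 2m+3$ the surgery is an L-space (since $P(-2,3,2m+1)$ is an L-space knot of genus $m+2$), Ozsv\'ath--Szab\'o then forces any filling to be negative definite, and one obstructs negative definite fillings using correction terms; tightness comes from Legendrian surgery below $\overline{\tb}=2m+3$ and from the Lisca--Stipsicz non-vanishing of the contact invariant above it. That part matches. But the step you yourself flag as ``the heart of the matter'' is exactly where the content of the theorem lives, and your proposal leaves it open in two concrete ways. First, you never produce the quantitative obstruction. The paper does not attempt a Donaldson/lattice-embedding argument with an auxiliary negative definite $P$ bounding $-M_{r,m}$ (you would still have to construct such a $P$ and then run a lattice analysis you have not set up); instead it invokes Owens--Strle's theorem that a rational homology sphere $Y$ bounding a negative definite $X$ must satisfy $\max_{\mathbf t}4d(Y,\mathbf t)\ge 1-\tfrac1\delta$ (for $\delta=|H_1|$ odd) \emph{provided $\delta$ is squarefree or $H_1(X)$ is torsion-free}, and then computes the $d$-invariants explicitly from the torsion coefficients of the Alexander polynomial via the L-space surgery formula, showing the maximum occurs at the spin$^c$ structure $i=m+2$ and violates the bound when $k^2-9k<4m-17$. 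Your inequality $c_1(\mathfrak s)^2+b_2^-\le 4d$ is in the right spirit but is not by itself enough: extracting a contradiction from it requires exactly the lattice-theoretic maximization over characteristic vectors that the Owens--Strle theorem packages, and you have not done that computation for any slope.

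Second, and independently, your plan to ``check the obstruction for each $r$ in the stated interval'' cannot work as written, because the Owens--Strle-type obstruction only applies directly when $|H_1(M_{r,m})|$ is squarefree (absent control on the torsion of the filling). The paper's interval statement is obtained differently: one verifies the obstruction at a \emph{single} well-chosen squarefree integer slope $s$ near the top of the interval ($s=15$ for $m=3$), and then uses the Owens--Strle monotonicity result that if $S^3_r(K)$ fails to bound a negative definite $4$--manifold for $r=s$ then it fails for every rational $0<r\le s$. This propagation step is entirely absent from your proposal, and without it you get no conclusion for non-squarefree or non-integral $r$. It is also the reason the number theory enters: part (2) for large $m$ needs Halberstam--Roth to guarantee a squarefree integer exists in the short interval $(2m+3,\,2m+\tfrac{\sqrt{16m+13}+9}{2})$. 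Your closing remark about hyperbolicity is a separate citation to the classification of exceptional surgeries and is not needed for the statement being proved.
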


It follows from the classification of exceptional surgeries on these knots that they are all hyperbolic $3$--manifolds. See~\cite{Meier}, ~\cite{Wu2011}, ~\cite{IJ2009} and ~\cite{FIKMS2009}.

\begin{figure}[h!]
\begin{center}
  \includegraphics[width=8cm]{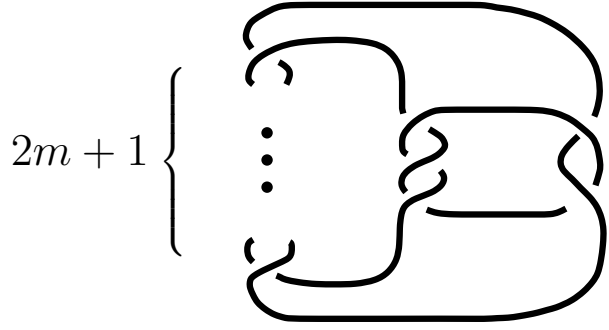}
 \caption{ The pretzel knot $P(-2,3,2m+1)$.}
  \label{figure_pretzel_knot}
\end{center}
\end{figure}

We want to make a couple of remarks about part~$(2)$ of Theorem~\ref{main_theorem}.

\begin{itemize}

\item  In fact {\it for any $m$} as soon as one can prove that there is a squarefree integer $s$ in the interval $I_m=[2m+3,2m+\frac{\sqrt{16m+13}+9)}{2})$, then we will be able to show that $M_{r,m}$ does not admit fillable contact structure for any $r \in [2m+3,s]$. For example for $m=4$, the interval $I_4=[11,17)$ contains three squarefree integers, taking the largest one $s=15$, our argument in Section~\ref{fill} will show that $M_{r,4}$ does not admit fillable contact structure for any $r \in [11,15]$. This seems to work for any $m$ but in general it is a very delicate number theory problem to guarantee the existence of a squarefree integer in a given interval. As it will be explained below for $m$ sufficiently large, there is always at least one squarefree integer in the interval $(2m+3,2m+\frac{\sqrt{16m+13}+9)}{2})$.         

\item The intervals of slopes mentioned in Theorem~\ref{main_theorem} are not exact intervals. In fact we expect that for any surgery slope in the interval $[2m+3,4m+6)$, the manifolds $M_{r,m}$ do not admit any fillable contact structures whenever $m>3$ (for $m=3$, due to surprising smaller exceptional surgeries on $P(-2,3,7)$, interval we expect is $[9,17)$). See Section~\ref{fill} for more details.

\item Our goal in this note, motivated by Question~\ref{hyperbolic_question}, was rather to exhibit existence of examples of rational homology spheres which are hyperbolic manifolds that do not have fillings and point out a curious subfamily of those non-fillable manifolds, see Question~\ref{tight?}, on which existence of a tight contact structure remains unclear.   
\end{itemize}

\bigskip

We also want to mention that Etnyre-Baldwin in \cite{EB2013} proved that there are hyperbolic manifolds with tight non-fillable contact structures. It is however not known whether these manifolds admit any other fillable structures.  

\bigskip

It was pointed out to us by John Etnyre that Youlin Li and Yajing Liu in \cite{Liliu} independently proved similar results.   

\smallskip
\textit {Acknowledgement.}
We are  grateful to John Etnyre for his support and interest in our work. We would also like to thank Tom Mark for valuable comments and discussions, and Michael Filaseta for the reference \cite{FGT2015}. The second author was supported in part by AMS-Simons travel grant.

\section{Preliminaries}
In this section we want to briefly recall some definitions and facts mainly from Heegard Floer theory related to our knots $P(-2,3,2m+1)$ and their surgeries.

\subsection{L-space surgeries} Recall that a rational homology $3$ sphere $Y$ is called an $L$--space if $|H_1(Y;\mathbb{Z})|  = rk(\widehat{HF(Y)})$. Here $\widehat{HF(Y)}$ is the hat version of Heegaard Floer homology group of $Y$. 

A knot $L$ in $Y$ is called a $L$--space knot if a large enough positive surgery on $L$ produces an $L$-space. It follows from work of Fintushel-Stern~\cite{Fintushel_Stern_Pretzel_knots}, Bleiler and Hodgson~\cite{Bleiler_Hodgson}, and Oszvath and Szabo~\cite{Oszvath_Szabo_Lens_Space_Surgeries} that $P(-2,3,2m+1)$ pretzel knots as in our Theorem~\ref{main_theorem} are $L$--space knots. Moreover, for these knots it is known that the slice genus denoted $g_s$ is $m+2>0$ and a result of Lisca-Stipsicz in ~\cite[Proposition $4.1$]{Lisca_Stipsicz_Oszvath_Szabo_invariants_1} gives that the smooth $3$--three manifold $M_{r,m}$ obtained by performing a rational $r$ surgery along pretzel knot $P(-2,3,2m+1)$ is an $L$--space for every $r\geq 2g_s-1=2m+3.$    

\subsection{Negative definite fillings.} The following theorem, due to Ozsv\'{a}th and Szab\'{o}, provides an obstruction to the existence of a symplectic filling for $L$-spaces.

\begin{theorem}[Theorem~$1.4$,\cite{Ozsvath_Szabo_L_Space}]\label{osfilling}
If $Y$ is an $L$--space, then all of its symplectic fillings are negative definite.
\end{theorem}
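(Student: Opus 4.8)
The plan is to argue by contradiction: assuming a symplectic filling $(X,\omega)$ of $(Y,\xi)$ with $b_2^+(X) > 0$, I would manufacture a closed symplectic $4$--manifold whose Ozsv\'ath--Szab\'o mixed invariant is forced to be simultaneously nonzero (because it is symplectic) and zero (because cutting along the $L$--space $Y$ kills it). Since $Y$ is a rational homology sphere, the intersection form of any filling is nondegenerate over $\mathbb{Q}$, so ruling out $b_2^+ > 0$ is exactly what is needed to conclude that the form is negative definite.

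First I would reduce to the case of a strong filling. Because $Y$ is a rational homology sphere, $H^2(Y;\mathbb{R}) = 0$ and the restriction of $\omega$ to $Y$ is exact; a standard deformation of $\omega$ near the boundary then upgrades a weak filling to a strong one without changing the underlying smooth manifold $X$, so that $b_2^+(X)$ is unaffected. Next, using the Eliashberg--Etnyre symplectic cap theorem I would embed the strong filling into a closed symplectic $4$--manifold $Z$, and by taking connected sums or blow-ups of the cap I would arrange $b_2^+(Z) > 1$ together with a splitting $Z = X_1 \cup_Y X_2$ along $Y$ in which both $b_2^+(X_1) > 0$ and $b_2^+(X_2) > 0$; this is where the assumption $b_2^+(X) > 0$ enters, guaranteeing that one of the pieces already has positive $b_2^+$.

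With $Z$ in hand, I would invoke Taubes' theorem: a closed symplectic $4$--manifold with $b_2^+ > 1$ has nonvanishing Seiberg--Witten invariant for its canonical $\mathrm{Spin}^c$ structure, and hence, via the identification with (or the direct symplectic nonvanishing of) the Ozsv\'ath--Szab\'o mixed invariant, $\Phi_{Z,\mathfrak{s}} \neq 0$. On the other hand, for an admissible cut with both sides of positive $b_2^+$, the mixed invariant factors as a composition of cobordism maps that passes through the reduced group $HF_{\mathrm{red}}(Y,\mathfrak{s}|_Y)$: positivity of $b_2^+$ on each piece forces the relevant $HF^\infty$--maps to vanish, so only the reduced part survives. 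Since $Y$ is an $L$--space, $HF_{\mathrm{red}}(Y) = 0$, forcing $\Phi_{Z,\mathfrak{s}} = 0$, a contradiction.

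The hard part, and where I would spend the most care, is the middle step: producing the cap with the correct $b_2^+$ bookkeeping and verifying that the mixed invariant genuinely factors through $HF_{\mathrm{red}}(Y)$. Both require the machinery of Ozsv\'ath--Szab\'o's four-manifold invariants, namely the vanishing of $F^\infty$ across a cobordism with $b_2^+ > 0$ and the compatibility of the cobordism maps with the cut, which I would need to assemble precisely. Once the factorization is established, the vanishing of $HF_{\mathrm{red}}$ for an $L$--space is immediate from the rank condition in its definition, and negative definiteness follows since a rational homology sphere boundary makes $b_2^+ = 0$ equivalent to the intersection form being negative definite.
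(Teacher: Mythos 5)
This statement is not proved in the paper at all: it is quoted verbatim as Theorem~1.4 of Ozsv\'ath--Szab\'o's \emph{Holomorphic disks and genus bounds} and used as a black box, so there is no in-paper argument to compare against. That said, your outline is essentially the original Ozsv\'ath--Szab\'o proof: upgrade the weak filling to a strong one using $H^2(Y;\mathbb{R})=0$, cap off to a closed symplectic $4$--manifold, use the nonvanishing of the mixed invariant for the canonical $\mathrm{Spin}^c$ structure (proved by Ozsv\'ath--Szab\'o directly via Lefschetz pencils, not via the then-unproven identification with Seiberg--Witten theory --- your hedge on this point is appropriate), and contrast it with the vanishing coming from the composition law through $HF_{\mathrm{red}}(Y)=0$ when both sides of the cut have $b_2^+>0$. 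One technical slip: you cannot arrange $b_2^+(Z)>1$ by ``connected sums or blow-ups'' --- blow-ups only increase $b_2^-$, and a connected sum with anything of positive $b_2^+$ would kill the very invariant you need to be nonzero. The correct bookkeeping is that the cap itself can be chosen with $b_2^+\ge 1$ (it contains a symplectic surface of positive square), and since the gluing is along a rational homology sphere, $b_2^+(Z)\ge b_2^+(X)+b_2^+(\mathrm{cap})\ge 2$ under your standing assumption $b_2^+(X)>0$; with that correction the argument closes as you describe.
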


The first of the following theorems of Owens and Strle provides an obstruction for a certain $3$--manifold to bound a negative definite manifold. The second shows negative definite fillings of a $3$--manifold that is obtained by positive rational surgery do not have any ``gap".  
  
\begin{theorem}[Theorem 2, \cite{Owens_Strle_Characterization_of_lattices}]
\label{theorem_owens_strle}
Let $Y$ be a rational homology $3$--sphere with $|H_1(Y;\mathbb{Z})| = \delta$. If $Y$ bounds a negative definite four manifold $X$, and if either $\delta$ is square free or there is no torsion in $H_1(X;\mathbb{Z})$, then

\begin{equation}
 \max_{ \mathbf{t} \in Spin^c(Y)} 4d(Y, \mathbf{t}) \geq 
  \begin{cases} 
   1-\frac{1}{\delta} & ~\text{for } \delta ~\text{odd} \\
    1   & \text{for } ~\delta ~\text{even}
  \end{cases}
\end{equation}
\end{theorem}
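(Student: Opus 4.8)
\emph{Proof proposal.} The plan is to combine the Ozsv\'ath--Szab\'o $d$--invariant inequality with a purely lattice--theoretic estimate on the largest square of a characteristic covector in a negative definite lattice, and then to explain how the two hypotheses (squarefree $\delta$, or torsion free $H_1(X)$) let one identify the determinant of the relevant lattice with $\delta$.

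First I would set up the reduction. Put $n=b_2(X)$ and let $Q_X$ be the intersection form on the free part of $H_2(X;\Z)$; since $X$ is negative definite this is a negative definite lattice of rank $n$. The key input is the Ozsv\'ath--Szab\'o inequality \cite{Ozsvath_Szabo_L_Space}: for every $\mathbf{s}\in Spin^c(X)$ one has
\[
c_1(\mathbf{s})^2+n\ \le\ 4\,d(Y,\mathbf{s}|_Y).
\]
Every $Spin^c$ structure on $Y$ extends over $X$, and as $\mathbf{s}$ ranges over $Spin^c(X)$ the first Chern classes $c_1(\mathbf{s})$ range over all characteristic covectors of $Q_X$. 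Hence, maximizing the inequality over $\mathbf{s}$ and recording the induced restriction to $Y$,
\[
\max_{\mathbf{t}\in Spin^c(Y)}4\,d(Y,\mathbf{t})\ \ge\ n+\max_{\xi}\xi^2,
\]
where $\xi$ ranges over all characteristic covectors of $Q_X$. This reduces the theorem to a statement about the single lattice $Q_X$.

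The heart of the argument is the lattice bound: a negative definite lattice $L$ of rank $n$ with $|\det L|=\delta$ admits a characteristic covector $\xi\in L^{\ast}$ with
\[
\xi^{2}\ \ge\
\begin{cases}
-(n-1)-\frac{1}{\delta} & \text{if }\delta\text{ is odd},\\
-(n-1) & \text{if }\delta\text{ is even}.
\end{cases}
\]
Granting this, $n+\xi^{2}\ge 1-\frac{1}{\delta}$ (respectively $\ge 1$), which is exactly the desired inequality. I would prove the lattice bound by induction on $n$. The base case $n=1$ forces $L=\langle-\delta\rangle$ and is a direct computation: when $\delta$ is odd the shortest characteristic covector has square $-1/\delta$, and when $\delta$ is even the zero covector is characteristic (precisely because $\langle-\delta\rangle$ is then even) and has square $0$. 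For the inductive step I would extract a norm--minimizing vector, pass to its orthogonal complement, and use the coset structure of the characteristic covectors to keep control of the determinant; the extremal case is $\langle-1\rangle^{\,n-1}\oplus\langle-\delta\rangle$, which realizes equality throughout and is exactly the lattice singled out in \cite{Owens_Strle_Characterization_of_lattices}.

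Finally I would account for the hypotheses, and this is where I expect the real difficulty. When $H_1(X;\Z)$ has no torsion one has $|\det Q_X|=|H_1(Y;\Z)|=\delta$, so the lattice bound applies verbatim and the reduction above is clean. When instead $\delta$ is squarefree but $H_1(X)$ carries torsion, the passage from $Spin^c(Y)$ to characteristic covectors of $Q_X$ is no longer a bijection, and $|\det Q_X|$ need not equal $\delta$; here one must analyze the discriminant form of $Y$ directly and use squarefreeness to recover enough rigidity (the order-$\delta$ linking form of $Y$ has no ``room'' for square factors) to push the covector estimate back down to the $\delta$--dependent bound. The main obstacle is thus twofold: proving the rank--induction lattice inequality with sharp $\delta$--dependence, and controlling the torsion so that the clean reduction to $\max_\xi\xi^2$ remains valid\,---\,it is exactly the squarefree / torsion free hypothesis that guarantees the latter, and without it the inequality can genuinely fail.
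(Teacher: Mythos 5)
This statement is not proved in the paper at all: it is quoted verbatim as Theorem~2 of Owens--Strle \cite{Owens_Strle_Characterization_of_lattices} and used as a black box, so there is no internal proof to compare your attempt against. Measured against the actual source, your outline does identify the correct strategy: combine the Ozsv\'ath--Szab\'o inequality $c_1(\mathbf{s})^2+b_2(X)\le 4d(Y,\mathbf{s}|_Y)$ for negative definite fillings with an Elkies-type extremal result for characteristic covectors of negative definite lattices, with $\langle-1\rangle^{n-1}\oplus\langle-\delta\rangle$ as the minimizing lattice, and then use the squarefree/torsion-free hypothesis to control the determinant of $Q_X$. (Minor point: the inequality you invoke is from Ozsv\'ath--Szab\'o's paper on absolutely graded Floer homologies, not the genus-bounds paper cited here; and your assertion that every $Spin^c$ structure on $Y$ extends over $X$ is false in general, though harmless for the direction of inequality you need, since the maximum over all of $Spin^c(Y)$ dominates the maximum over the image of restriction.)

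As a proof, however, the proposal has two genuine gaps, both of which you acknowledge but neither of which you close. First, the lattice inequality $\max_\xi \xi^2 \ge -(n-1)-\tfrac{1}{\delta}$ (resp.\ $-(n-1)$) is the entire mathematical content of the theorem --- it is a nontrivial generalization of Elkies' theorem on unimodular lattices --- and ``extract a norm-minimizing vector, pass to its orthogonal complement, and use the coset structure'' is not an argument: the orthogonal complement of a vector in a non-unimodular lattice need not split off, its determinant is not controlled by $\delta$ alone, and keeping the sharp $\delta$-dependence through such an induction is precisely where the work lies. Second, the reduction requires knowing how $|\det Q_X|$ relates to $\delta$; in general $\delta$ and $\det Q_X$ differ by a square factor coming from the torsion of $H_1(X)$ and from non-surjectivity of $H^2(X)\to H^2(Y)$, and your treatment of the squarefree case (``the linking form has no room for square factors'') gestures at the right idea without establishing that the covector estimate for $Q_X$ actually implies the stated bound in terms of $\delta$. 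Since the inequality $1-\tfrac{1}{\delta'}\ge 1-\tfrac{1}{\delta}$ goes the wrong way when $\delta'<\delta$, this step cannot be waved through. In short: an accurate roadmap of Owens--Strle's proof, but not a proof.
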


\begin{theorem}[Theorem~1(b), ~\cite{Owens_Strle_Negative_Definite_four_manifolds}]
\label{theorem_owens_strleNew}
For all rational numbers $r > m(K)$, the $r$-surgery $S^3_r(K)$ on $K$ bounds a negative definite
manifold $X_r$, with $H^2(X_r)\longrightarrow H^2(S^3_r(K))$ surjective where 
\[
m(K)=\inf\{r\in\mathbb{Q}_{\geq0}|S^3_r(K)~~ \textrm{bounds~~a~~negative-definite~~4~-~manifold}\}.
\]
\end{theorem}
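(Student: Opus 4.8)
The plan is to prove the statement by a monotonicity argument: I would show that the set
\[
NB(K)=\{r\in\Q_{\geq 0}:\ S^3_r(K)\ \text{bounds a negative definite $4$--manifold}\}
\]
is upward closed, so that it is exactly the half--line $(m(K),\infty)$ (together with possibly the endpoint $m(K)$), and then refine the construction to arrange the surjectivity of $H^2(X_r)\to H^2(S^3_r(K))$. The whole theorem then reduces to a single cobordism lemma: \emph{if $r>r'$ and $S^3_{r'}(K)$ bounds a negative definite $X'$, then $S^3_r(K)$ bounds a negative definite $X_r$.} Granting this, any $r>m(K)$ strictly exceeds some good slope $r'\in NB(K)$ by the definition of the infimum, so the lemma applies and produces the required filling.

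The core step is to build a negative definite cobordism $W$ with $\partial W=-S^3_{r'}(K)\sqcup S^3_r(K)$ and then set $X_r=X'\cup_{S^3_{r'}(K)}W$; by Novikov additivity the intersection form of $X_r$ is the orthogonal sum of those of $X'$ and $W$, hence negative definite. To construct $W$ I would connect $r'$ to $r$ by a path in the Farey graph consisting of slopes that increase and are pairwise at distance one, and realize each elementary step by attaching a single $2$--handle along the dual knot (the core of the surgery solid torus) of the corresponding surgery. Equivalently, each such step can be viewed as a blow--up contributing a copy of $\overline{\mathbb{CP}}\,^2$, which is negative definite. Tracking the cohomology through the Mayer--Vietoris sequence of the union $X_r=X'\cup W$, and using that every stage is a $2$--handle cobordism, I would then arrange that $H^2(X_r)\to H^2(S^3_r(K))$ is surjective.

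The hard part will be verifying that $W$ is genuinely negative definite, and in particular that it is the \emph{increasing} direction of the slope that works. The subtlety is that the dual knot along which each handle is attached is only rationally null--homologous — it is torsion of order equal to $|H_1(S^3_{r'}(K);\Z)|$ — so a handle of framing $k$ does not simply contribute a diagonal block $(k)$ to the form; its self--intersection in $W$ is a rational number assembled from $k$ and the two slopes via the linking form. The delicate bookkeeping is to expand $r$ and $r'$ in negative continued fractions, compute these self--intersections along the chosen Farey path, and check that their signs are uniformly negative exactly when the slope increases past $m(K)$.

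Finally, I would run the sanity check $K=U$, the unknot, where $S^3_{p/q}(U)=L(p,q)$ bounds the negative definite linear plumbing coming from the continued fraction expansion of $p/q$ for every $p/q>0$, so that $NB(U)=(0,\infty)$ and $m(U)=0$. This example pins down the orientation conventions and confirms that increasing the surgery coefficient is the correct direction, guarding against the sign error that is the main pitfall of the whole argument.
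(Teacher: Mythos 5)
First, note that the paper itself does not prove this statement: it is imported verbatim as Theorem~1(b) of Owens--Strle \cite{Owens_Strle_Negative_Definite_four_manifolds}, so your attempt has to be measured against the proof in that reference rather than against anything in the present paper. Your plan is in fact the same as the one Owens and Strle carry out: reduce everything to the monotonicity statement that membership in your set $NB(K)$ is inherited by all larger positive slopes, realize the passage from $r'$ to $r$ by a cobordism $W$ built from $2$--handles attached along the rationally null-homologous dual knot and its meridians following a continued-fraction/Farey path, compute the rational self-intersections through the linking form of the intermediate surgeries, and conclude negative definiteness of $X_r=X'\cup_{S^3_{r'}(K)}W$ from additivity of the signature and of $b_2$ over $\mathbb{Q}$. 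The reduction from the infimum $m(K)$ to a single monotonicity lemma is handled correctly, and the unknot sanity check is the right one.

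The gap is that the one step carrying all the content --- the verification that $W$ is negative definite --- is exactly the step you defer as ``delicate bookkeeping.'' Everything else in the outline is soft; without that computation this is a correct strategy, not a proof. Three smaller points. (i) Describing each elementary step as ``a blow-up contributing a copy of $\overline{\mathbb{CP}^2}$'' is inconsistent with your own (correct) later remark that the attaching curves are only torsion in homology: the elementary cobordism is not a connected sum with $\overline{\mathbb{CP}^2}$, and its form must be computed rationally via the linking form, as you say two paragraphs later. (ii) Novikov additivity gives additivity of the signature, not an orthogonal splitting of the intersection form over $\mathbb{Z}$; since $b_2$ is also additive when gluing along a rational homology sphere, negative definiteness of $X_r$ does follow, but the claim as written is too strong. (iii) The sign verification for $W$ has nothing to do with $m(K)$: the cobordism from $r'$ to $r$ is negative definite for any $0<r'<r$, and $m(K)$ enters only through the existence of the initial filling $X'$. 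Finally, the surjectivity of $H^2(X_r)\to H^2(S^3_r(K))$ --- which is part of the statement and is precisely what makes the theorem usable downstream --- is asserted to be ``arrangeable'' via Mayer--Vietoris but no mechanism is given; in Owens--Strle this is tracked carefully through the handle attachments and is not automatic for an arbitrary starting filling $X'$.
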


\section{Proof of Theorem~\ref{main_theorem}}

In this section we give our proof of Theorem~\ref{main_theorem}.

\subsection{Tightness.} Recall $M_{r,m}$ denotes a smooth rational $r$--surgery along the pretzel knot $P(-2,3,2m+1)$. We prove that for any $r \neq 2m+3$, $M_{r,m}$ does admit a tight contact structure. To end this, we first note that by work of Ng in~\cite [Theorem $6.1.4$]{Ng2001_thesis} the pretzel knot $P(-2,3,2m+1)$ has the maximal Thurston-Bennequin number  $\overline{tb}(P(-2,3,2m+1))=2m+3$. In particular, since the genus of $P(-2,3,2m+1)$ is $m+2$, we obtain that the knot type $P(-2,3,2m+1)$ realizes the Bennequin bound. If the surgery coefficient $r<\overline{tb}=2m+3$, then, for each $m$, such surgery can be obtained by a contact surgery on a Legendrian link for which all contact surgery coefficients are negative, in particular it describes a Stein fillable contact structure, and hence it is tight. When $r>2m+3$, smooth $r$--surgery can also be obtained by a suitable contact surgery on a Legendrian link. It follows from work of Lisca-Stipsicz in~\cite [Theorem $1.1$]{Lisca_Stipsicz_Oszvath_Szabo_invariants_1} that   
this contact surgery diagram describes a contact structure with non-zero Heegaard
Floer contact invariant, and hence it is tight. For slope $r=2m+3$, there is an (obvious) corresponding contact surgery with contact framing zero, and such a contact surgery, by definition, results an overtwisted contact structure. One then naturally wonders about the following question.

\begin{question}\label{tight?}
Does $M_{2m+3,m}$ also support a tight contact structure?
\end{question}
 
\begin{remark}
We want to note the pretzel knots $P(-2, 3, 1), P(-2, 3, 3)$ and $P(-2, 3, 5)$ are isotopic to the positive torus knots $T(2, 5)$, $T(3, 4)$, and $T(3, 5)$, respectively. Owens and Strle in~\cite{Owens_Strle_Negative_Definite_four_manifolds} ({\it cf.}~\cite[Theorem~$1.3$]{LecuonaLisca}), extending results from~\cite[Theorem~$4.2$]{Lisca_Stipsicz_Oszvath_Szabo_invariants_1} found exact interval of surgeries on positive torus knots for which the surgery manifold does not admit fillable contact structures. On the other hand, there is exactly one surgery coefficient, $r=3$ and only one of these knot, the torus knot $T(2,5)$ (or $P(-2,3,1)$) for which the resulting surgery is known~\cite{Lisca_Stipsciz_Classification_of_manifolds_admitting_tight_contact_Structures} to yield a manifold without any positive tight contact structures. 
\end{remark}

\subsection{Fillability.}\label{fill} In this section we study fillings of $M_{r,m}$. Our goal is to show that for any $P(-2,3,2m+1)$ pretzel knot we can calculate a range of slopes, such that the surgeries with these slopes do not bound a negative definite manifold. Indeed we will try to find largest possible interval such that for each integer in the interval, the inequality in Theorem~\ref{theorem_owens_strle} fails to hold.

We want to note $M_{4m+6,m}$ is orientation preserving diffeomorphic to the small Seifert fibered space $M(-2, \frac{1}{2},\frac{1}{4},\frac{2m-7}{2m-5})$  which obviously is a boundary of a negative definite manifold for each $m\geq 3$, and all tight contact structures on $M(-2, \frac{1}{2},\frac{1}{4},\frac{2m-7}{2m-5})$ are Stein fillable (See \cite{Ghiggini2008}). Moreover as explained above that for any $r<2m+3$, $M_{r,m}$ do admit a Stein fillable contact structure. So necessarily the interval of surgeries that result non-fillable contact manifolds must be a subset of $[2m+3,4m+6)$ for $m\geq 4$ ( and $[9,17)$ for $m=3$, due to surprising smaller exceptional surgeries on $P(-2,3,7)$.)

\subsubsection{Fillability of $M_{r,3}$.} Let $K$ denote the $P(-2,3,7)$ pretzel knot from now on. Its slice genus $g_s(K) = 5$. It follows from Proposition~$4.1$ of~\cite{Lisca_Stipsicz_Oszvath_Szabo_invariants_1} that the manifold obtained by any rational surgery $r \geq 9$ on $K$ is an $L$--space. So we show that $M_{r,3}$ cannot bound a negative definite $4$--manifold for any $r\in(0,15]$. Towards that end we use Theorem~\ref{theorem_owens_strle} and~ \ref{theorem_owens_strleNew} of Owens and Strle.

We compute the $d$ invariant for the manifold obtained as $15$--surgery on $P(-2,3,7)$ pretzel knot. For this we use the formula for $d$ invariants given as Theorem~$6.1$ in~\cite{Owens_Strle_Characterization_of_lattices} which we recall. If $L \subset S^3$ is an $L$--space knot and $t_i(L)$ denote its torsion coefficients. Then for $n > 0 $ the $d$-invariants of the $\pm n$ surgery on $L$ are given by 
\[d(L_n,i) = d(U_n,i)-2t_i(L), d(L_{-n},i) = - d(U_n,i) \] 
\noindent
for 	$|i| \leq n/2$ being a $Spin^c$ structure on $L_n$, where $L_n$ denotes the manifold obtained by $n$ surgery along knot $L$ and $U_n$ denotes manifold obtained by $n$ surgery along the unknot. The $d$ invariants for $U_n$ are given as \[d(U_n,i) = \frac{(n-2|i|)^2}{4n} - \frac{1}{4}\]
\noindent
Recall that, if $L \subset S^3$ is an $L$--space knot and \[\Delta_L(T) = a_0 + \sum_{j>0} 	a_j (T^j+T^{-j}) \] its symmetrized Alexander polynomial, then the torsion coefficients of $L$ are given as
\[t_i(L)  = \sum_{j > 0 } j a_{|i|+j} \]. 

In case of $(-2,3,7)$ pretzel knot $K$, the Alexander polynomial is given as
\[ \Delta_K(T) = T^5 -T^4 + T^3 - T^2+ T -1 + T^{-1} - T^{-2}+T^{-3} -T^{-4}+T^{-5} \]

So the torsion coefficients in this case are given as \[t_0(K) = 3, t_1(K) = t_{-1}(K) = 2, t_2(K) = t_{-2}(K) = 2, t_3(K) = t_{-3}(K) =1, t_4(K) = t_{-4}(K) = 1,\] \[t_i(K) = 0 \mbox{  } for \mbox{  } |i| \geq 5 \]

So the $d$ invariants for $M_{15,3}$ are calculated as 
\[
 d(M_{15,3},i) = 
  \begin{cases} 
   -\frac{5}{2} & ~\text{if } i=0 \\
    -\frac{43}{30}   & \text{if } |i|=1 \\
    -\frac{67}{30}   & \text{if } |i|=2 \\
    -\frac{27}{30}   & \text{if } |i|=3 \\
    -\frac{43}{30}   & \text{if } |i|=4 \\
     ~~\frac{1}{6}     & \text{if } |i|=5 \\
    -\frac{1}{10}    & \text{if } |i|=6 \\
    -\frac{7}{30}    & \text{if } |i|=7 
  \end{cases}
\]

In particular for any $i \leq 7$,  $4 d(M_{15,3},i) < \frac{14}{15}$, that is the inequality in Theorem~\ref{theorem_owens_strle} fails to hold for squarefree surgery coefficient $r=15$. Hence $M_{15,3}$ cannot bound a negative definite $4$--manifold. Now it follows from Theorem~\ref{theorem_owens_strleNew} that for any rational $0<r \leq 15$, the surgered manifold $M_{r,3}$ cannot bound a negative definite $4$--manifold. Moreover, since $M_{r,3}$ is an $L$--space for any $r\geq9$, by Theorem~\ref{osfilling} we obtain that $M_{r,3}$ does not admit fillable structure for any $r\in[9,15]$.  

\subsubsection{Fillabality of $M_{r,m}$.} We return to the general case. The idea of proof is already provided above. We just need to check some computational details. In what follows, we provide computations of torsion coefficients $t_i(P(-2,3,2m+1))$ in Lemma~\ref{lemma_torsion_coefficient}, then in Lemma~\ref{boundlemma}, we provide an interval such that for each integer in the interval the corresponding surgery manifold fails to hold the inequality in Theorem~\ref{theorem_owens_strle}.

The symmetrized Alexander polynomial for $P(-2,3,2m+1)$ is given by
\[\Delta_K(T) = T^{m+2} - T^{m+1}+\dots - T^{-(m+1)} + T^{-(m+2)} \]
This can be written as 
\[\Delta_K(T) = a_0 + \sum_{j>0} a_j(T^j+T^{-j}) \]
\noindent
here $a_j = \pm 1$, for any $j$.

It can be seen easily from the formula for the Alexander polynomial that $a_i=(-1)^{i+m}.$

Following proposition will be used in the computation of the torsion invariants. Its proof is straightforward and is left for the reader as an exercise.

\begin{proposition}
\label{sum_proposition}
If $k > 1$ is an odd integer, then
\[1-2+3-4+ \dots + k = \frac{k+1}{2}.\]
If $k > 1$ is an even integer, then
\[-1+2-3+\dots+k = \frac{k}{2}. \] 

\end{proposition}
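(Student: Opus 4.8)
The plan is to prove both identities by pairing consecutive terms, treating the two parities of $k$ separately. First I would fix the signs explicitly: in the odd case the term $j$ carries the sign $(-1)^{j+1}$ (so the odd-indexed terms are positive), giving the sum $\sum_{j=1}^{k}(-1)^{j+1}j$; in the even case the term $j$ carries the sign $(-1)^{j}$, giving $\sum_{j=1}^{k}(-1)^{j}j$. With this bookkeeping in place the two halves of the statement become independent finite computations.

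For odd $k$, I would group the summands into consecutive pairs, leaving the final (positive) term $k$ unpaired: $(1-2)+(3-4)+\dots+\bigl((k-2)-(k-1)\bigr)+k$. Each bracketed block equals $-1$, and since $k$ is odd there are exactly $(k-1)/2$ of them; adding the leftover $k$ gives $-\frac{k-1}{2}+k=\frac{k+1}{2}$. For even $k$ the pairing is even cleaner, since nothing is left over: $(-1+2)+(-3+4)+\dots+\bigl(-(k-1)+k\bigr)$, where each of the $k/2$ blocks equals $1$, so the sum is $k/2$.

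Alternatively, each formula admits a two-line induction in steps of two, with base cases $k=1$ (respectively $k=2$): passing from $k$ to $k+2$ appends two terms whose combined signed contribution is $+1$ in both cases, which is exactly the increment of the corresponding right-hand side. The only point requiring any care—and the closest thing to an obstacle in an otherwise entirely routine argument—is the parity bookkeeping: one must confirm that the last term really is $+k$ in the odd case, and that the number of pairs is counted correctly, namely $(k-1)/2$ pairs in the odd case versus $k/2$ pairs in the even case. Once these small counts are verified the identities follow immediately.
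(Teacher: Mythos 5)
Your pairing argument is correct, and since the paper explicitly leaves this proposition as a routine exercise for the reader, your proof supplies exactly the standard computation that was intended: $(k-1)/2$ blocks of $-1$ plus the leftover $+k$ in the odd case, and $k/2$ blocks of $+1$ in the even case. No issues.
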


\begin{lemma}
\label{lemma_torsion_coefficient}
Torsion coefficients, $t_i(K)$, for the $(-2,3,2m+1)$ pretzel knot denoted $K$ are given as follows.
\begin{enumerate}
\item If $m$ is odd, then 

\[
 t_i(K) = 
  \begin{cases} 
   \frac{m+2-i}{2} & ~\text{if } i \leq m+2 ~\text{odd} \\
    \frac{m+3-i}{2}   & \text{if } i < m+2 ~\text{even} \\
    0 & i\geq m+2
  \end{cases}
\]

\item If $m$ is even, then

\[
 t_i(K) = 
  \begin{cases} 
   \frac{m+3-i}{2} & ~\text{if } i < m+2 ~\text{odd} \\
    \frac{m+2-i}{2}   & \text{if } i \leq m+2 ~\text{even} \\
    0 & i\geq m+2
  \end{cases}
\]

\end{enumerate}

\end{lemma}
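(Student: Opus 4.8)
The plan is to compute $t_i(K)$ directly from its defining formula $t_i(K)=\sum_{j>0} j\,a_{|i|+j}$ together with the closed form $a_j=(-1)^{j+m}$ already recorded for the Alexander coefficients; by the symmetry $t_i=t_{-i}$ it suffices to treat $i\ge 0$. Since $\Delta_K$ has top degree $m+2$ we have $a_\ell=0$ for $\ell>m+2$, so only finitely many terms survive: setting $k=m+2-i$, the sum collapses to $t_i(K)=\sum_{j=1}^{k} j\,a_{i+j}$, which is empty (hence $0$) as soon as $i\ge m+2$. This disposes of the third line in each case.

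For $i<m+2$ I would substitute $a_{i+j}=(-1)^{i+j+m}$ and pull the constant sign out front, giving $t_i(K)=(-1)^{i+m}\sum_{j=1}^{k} j(-1)^j=(-1)^{i+m}\bigl(-1+2-3+\cdots\pm k\bigr)$. The alternating sum $\sum_{j=1}^{k} j(-1)^j$ is exactly the quantity controlled by Proposition~\ref{sum_proposition}: for $k$ even it equals $k/2$, while for $k$ odd it equals $-(k+1)/2$ (the proposition is stated beginning with $+1$, so the odd case acquires an overall minus sign). Substituting $k=m+2-i$ then yields $t_i(K)=(-1)^{i+m}\tfrac{m+2-i}{2}$ when $k$ is even and $t_i(K)=-(-1)^{i+m}\tfrac{m+3-i}{2}$ when $k$ is odd. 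The two edge values $k\in\{0,1\}$, where Proposition~\ref{sum_proposition} does not formally apply, are checked directly.

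The final step is sign bookkeeping, and it is here that the only genuine care is needed. The key observation is that $k=m+2-i$ has the same parity as $m+i$, so $k$ is even precisely when $(-1)^{i+m}=+1$ and $k$ is odd precisely when $(-1)^{i+m}=-1$. In both lines the prefactor therefore cancels the sign, collapsing the expressions to the positive values $t_i(K)=\tfrac{m+2-i}{2}$ when $m+i$ is even and $t_i(K)=\tfrac{m+3-i}{2}$ when $m+i$ is odd. To match the statement I would then translate the parity of $m+i$ into a condition on $i$ alone by fixing the parity of $m$: for $m$ odd, $m+i$ is even exactly when $i$ is odd (and odd exactly when $i$ is even), while for $m$ even the roles of odd and even $i$ are interchanged. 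This produces the two cases verbatim, with the boundary value $i=m+2$ consistent since $\tfrac{m+2-i}{2}=0$ there.

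The main obstacle is not conceptual but combinatorial: keeping the three interacting parities --- those of $m$, of $i$, and of $k=m+2-i$ --- aligned so that the sign $(-1)^{i+m}$ cancels the sign produced by Proposition~\ref{sum_proposition} rather than reinforcing it. Once that alignment is pinned down, the remainder is a routine finite summation.
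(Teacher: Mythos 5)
Your proposal is correct and follows essentially the same route as the paper: both compute $t_i(K)=\sum_{j>0}ja_{|i|+j}$ directly from the closed form $a_j=(-1)^{j+m}$ and evaluate the resulting alternating sum via Proposition~\ref{sum_proposition}. The only cosmetic difference is that you factor out the global sign $(-1)^{i+m}$ and reconcile parities at the end (also checking the edge cases $k\in\{0,1\}$ explicitly, which the paper elides), whereas the paper splits on the parity of $i$ from the outset.
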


\begin{proof}

We only prove the first part. The second part is exactly analogous. Assume that $m \geq 3$ and odd. Torsion coefficients are defined as $t_i(K) = \sum_{j>0} j a_{|i|+j} $. If $i$ is odd then 

\begin{align}
    t_{i}(K) &= a_{i+1} + 2 a_{i+2}+ \dots + (m+2-i) a_{m+2}  \\
      &= -1 +2 -3+ \dots + (m+2-i) 
\end{align}

It now follows from Proposition~\ref{sum_proposition} that in this case $t_i(K) = \frac{m+2-i}{2}$ as $(m+2-i)$ is even. Similarly, if $i$ is even, then
\begin{align}
    t_{i}(K) &= a_{i+1} + 2 a_{i+2}+ \dots + (m+2-i) a_{m+2}  \\
      &= 1 -2 +3- \dots + (m+2-i) 
\end{align}

Again by Proposition~\ref{sum_proposition} we get that $t_{i}(K) = \frac{m+3-i}{2}$.

\end{proof}

Now we compute the $d$-invariants for some surgeries. In particular, we show the following.

\begin{lemma}\label{boundlemma}
Let $K$ denote the $P(-2,3,2m+1)$ pretzel knot with $m \geq 3$. Let $M_{2k+m,m}$ denote the manifold obtained by $(2m+k)$ surgery along $K$ such that $k^2-9k<4m-17$. Then the inequality in Theorem~\ref{theorem_owens_strle} fails to hold.
\end{lemma}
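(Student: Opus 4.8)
The plan is to compute the left-hand maximum $\max_i 4d(M_{2m+k,m},i)$ in the inequality of Theorem~\ref{theorem_owens_strle} exactly, and then compare it, according to the parity of $\delta=|H_1(M_{2m+k,m};\mathbb{Z})|=2m+k$, against the right-hand side. Writing $n=2m+k$ and combining the two $d$-invariant formulas recalled above, each spin$^c$ structure $i$ with $|i|\le n/2$ satisfies
\[
4d(M_{2m+k,m},i)=\frac{(n-2|i|)^2}{n}-1-8\,t_i(K),
\]
with $t_i(K)$ given by Lemma~\ref{lemma_torsion_coefficient}. The whole statement thus reduces to maximizing this quantity over $i$ and inserting the result into Theorem~\ref{theorem_owens_strle}.

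First I would locate the maximizer. The decisive feature of Lemma~\ref{lemma_torsion_coefficient} is that $t_i(K)=0$ for $i\ge m+2$, while for $i<m+2$ both admissible values satisfy the uniform lower bound $t_i(K)\ge\frac{m+2-i}{2}$. Splitting the range accordingly, on $0\le i\le m+2$ this bound gives $4d(M_{2m+k,m},i)\le g(i)$, where $g(i)=\frac{(n-2i)^2}{n}-1-4(m+2-i)$; since $g'(i)=\frac{8i}{n}\ge0$, the function $g$ is nondecreasing, so $4d\le g(i)\le g(m+2)=\frac{(k-4)^2}{2m+k}-1$. On the torsion-free range $m+2\le i\le n/2$ one has $0\le n-2i\le k-4$, whence $4d=\frac{(n-2i)^2}{n}-1\le\frac{(k-4)^2}{2m+k}-1$ as well. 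Therefore, for $k\ge4$,
\[
\max_{i}4d(M_{2m+k,m},i)=\frac{(k-4)^2}{2m+k}-1,
\]
attained at $i=m+2$; the earlier surgery $M_{15,3}$, whose maximum $2/3=\frac{(9-4)^2}{15}-1$ sits at $i=5=m+2$, illustrates the location of this maximum. For $k=3$ the index $m+2$ exceeds $n/2$, and the same estimate yields $\max_i 4d\le g(m+1)<0$, so the inequality in Theorem~\ref{theorem_owens_strle} fails at once.

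It then remains to check that $\frac{(k-4)^2}{2m+k}-1$ lies strictly below the Owens--Strle threshold. When $\delta=2m+k$ is odd I must verify $\frac{(k-4)^2}{2m+k}-1<1-\frac{1}{2m+k}$, which rearranges to $k^2-10k+17<4m$; when $\delta$ is even I must verify $\frac{(k-4)^2}{2m+k}-1<1$, which rearranges to $k^2-10k+16<4m$. The hypothesis $k^2-9k<4m-17$ gives $k^2-10k+17=(k^2-9k+17)-k<4m-k<4m$ and, the same way, $k^2-10k+16<4m$, so both inequalities hold and Theorem~\ref{theorem_owens_strle} is violated in either parity. I note that this hypothesis is exactly the condition $2m+k<2m+\frac{\sqrt{16m+13}+9}{2}$ appearing in Theorem~\ref{main_theorem}, and is in fact marginally stronger than the sharp bound $k^2-10k<4m-17$ produced by the computation.

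The step I expect to be the main obstacle is pinning the global maximum to $i=m+2$: because $t_i(K)$ depends on the parity of $i$, the exact value of $4d$ oscillates with $i$ and cannot be maximized by naive differentiation of the exact expression. The device that removes this difficulty is the uniform lower bound $t_i(K)\ge\frac{m+2-i}{2}$ together with the monotone smooth interpolant $g$, which dominates $4d$ on the torsioned range, while the torsion-free range is controlled separately by the elementary inequality $0\le n-2i\le k-4$. The only remaining bookkeeping is the boundary cases $k=3,4$ and the parity split in the threshold, all dispatched by the elementary estimates above.
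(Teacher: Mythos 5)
Your proof is correct and takes essentially the same route as the paper: both arguments locate the maximum of the $d$-invariant at $i=m+2$, evaluate it as $\frac{(k-4)^2}{2m+k}-1$, and compare against the Owens--Strle threshold according to the parity of $2m+k$, with the hypothesis $k^2-9k<4m-17$ closing the final inequality. Your monotone interpolant $g$ is a slightly cleaner substitute for the paper's separate claim that $d(M,i)<0$ for $|i|<m+2$, and you additionally dispatch the boundary case $k=3$ explicitly, but the underlying decomposition and comparison are the same.
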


We first make some more general observations about the $d$ invariants of the surgered manifold. 
\begin{claim}\label{claim1}
 The $d$ invariant $d(M,i)$ is negative for any $|i| < m+2$ and $k^2-9k<18m-4$.
\end{claim}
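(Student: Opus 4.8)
The goal of Claim~\ref{claim1} is to show that the $d$-invariants $d(M,i)$ of the surgered manifold $M=M_{2m+k,m}$ are negative for all $|i|<m+2$, under the arithmetic hypothesis $k^2-9k<18m-4$. My plan is to write out $d(M,i)$ explicitly using the surgery formula recalled in the $P(-2,3,7)$ case, namely
\[
d(M,i)=d(U_{2m+k},i)-2t_i(K)=\frac{(2m+k-2|i|)^2}{4(2m+k)}-\frac14-2t_i(K),
\]
and then reduce the inequality $d(M,i)<0$ to a statement about the torsion coefficients $t_i(K)$ computed in Lemma~\ref{lemma_torsion_coefficient}. First I would observe that for $|i|<m+2$ the torsion coefficient $t_i(K)$ equals $\tfrac{m+2-i}{2}$ or $\tfrac{m+3-i}{2}$ depending on parity, so in every case $2t_i(K)\geq m+2-|i|$. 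Substituting this lower bound and clearing the positive denominator $4(2m+k)$ turns $d(M,i)<0$ into a quadratic inequality in $|i|$.

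The key computation is to expand $(2m+k-2|i|)^2-(2m+k)-4(2m+k)\cdot 2t_i(K)<0$. Writing $n=2m+k$ and $x=|i|$, and using $2t_x(K)\geq m+2-x$ (with the opposite-parity case handled by the slightly larger value, which only helps), the left side becomes at most a downward- or upward-opening quadratic in $x$ whose extreme value over the range $0\le x\le m+1$ must be shown to be negative. I expect the worst case to occur at an endpoint of the interval $|i|<m+2$, most likely at $i=0$ (where $t_0(K)$ is largest) or near $|i|=m+1$ (where the $d(U_n,i)$ term is smallest relative to the torsion). The plan is therefore to check the inequality at these boundary values and argue monotonicity or concavity in between; this is where the hypothesis $k^2-9k<18m-4$ should enter, as the precise threshold making the endpoint estimate go through.

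The main obstacle I anticipate is bookkeeping the parity cases cleanly: the formula for $t_i(K)$ branches on whether $i$ and $m$ are odd or even, so the constant in $2t_i(K)$ alternates between $m+2-i$ and $m+3-i$. I would handle this by using the uniform lower bound $2t_i(K)\geq m+2-i$ for all relevant $i$ (which holds in every parity case and is the binding estimate for proving negativity), thereby collapsing the four cases into one quadratic inequality and deferring the sharper value only if the uniform bound is insufficient at some endpoint. A secondary subtlety is confirming that the quantity $k^2-9k<18m-4$ is exactly what the endpoint analysis produces after completing the square and discarding lower-order terms; I would carry the algebra symbolically in $n=2m+k$ until the $m$-dependence cancels into this form, rather than plugging in numbers, so that the stated hypothesis emerges as the natural sufficient condition rather than an ad hoc bound.
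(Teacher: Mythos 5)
Your overall strategy coincides with the paper's: substitute the surgery formula $d(M,i)=\frac{(2m+k-2|i|)^2}{4(2m+k)}-\frac14-2t_i(K)$, clear the positive denominator, and analyze the resulting quadratic in $|i|$ at its worst value. One minor point first: after clearing denominators the terms linear in $|i|$ cancel, leaving $4i^2$ plus a constant, so the quantity is increasing in $|i|$ and the worst case sits at the top of the range, not at $i=0$; your hedge about checking both endpoints and arguing monotonicity in between resolves correctly in favor of $|i|=m+1$.

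The substantive issue is that your leading simplification, the uniform bound $2t_i(K)\ge m+2-|i|$, is too lossy to prove the claim with the stated constant. At the binding index $i=m+1$ this bound only gives $2t_{m+1}\ge 1$, whereas the true value from Lemma~\ref{lemma_torsion_coefficient} is $2t_{m+1}=m+3-(m+1)=2$. Carrying the uniform bound through with $n=2m+k$, the endpoint computation gives
\[
4(m+1)^2+n^2-n-4n(m+2)=k^2-9k-10m+4,
\]
so you would only establish negativity under $k^2-9k<10m-4$, not under the claimed $k^2-9k<18m-4$. To recover the stated range you must keep the two parity branches separate, which is exactly what the paper does: the branch $2t_i=m+3-i$ has its largest admissible index at $i=m+1$ and yields the condition $k^2-13k<18m-4$, while the branch $2t_i=m+2-i$ has its largest admissible index at $i=m$ and yields $k^2-9k<18m$; both conditions are implied by $k^2-9k<18m-4$ for $k\ge 0$. (The weaker constant $10m-4$ would in fact still suffice for Lemma~\ref{boundlemma}, which only invokes the claim for $k^2-9k<4m-17$, but it does not prove the claim as literally stated.) Since you explicitly flagged falling back to the sharper parity-dependent values if the uniform bound fails at an endpoint, this is a repairable quantitative gap rather than a wrong approach.
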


\begin{proof}
We compute the formula for $d$ invariant as
\begin{equation}\label{eqnn}
 d(M,i) = 
  \begin{cases} 
   \frac{(2m+k-2|i|)^2}{4(2m+k)} - \frac{1}{4} - (m+3-i) & ~\text{if}~ m+i ~\text{odd}~~\text{and}~ i < m+2 \\
    \frac{(2m+k-2|i|)^2}{4(2m+k)} - \frac{1}{4} - (m+2-i)   & ~\text{if}~ m+i ~\text{ even}~~\text{and}~ i < m+2
  \end{cases}
\end{equation}

To show that $d(M,i) < 0$ it suffices to show that the numerator of Equation~\ref{eqnn} for each choice of $m$ and $i$ is negative. This will not be true in general. So some conditions on $k$ are necessary. Assume $m$ is odd and $i<m+2$ is even. It is enough to check this for largest $i<m+2$ odd. That is for $i=m+1$. Simple calculation shows that the numerator is negative whenever $k^2-13k<18m-4$. The same calculation holds true when $m$ is even and $i<m+2$ is odd. Similarly one can show that when $m$ is odd (or even) and $i<m+2$ is odd (or even), then the numerator  $(2m+k-2|i|)^2-(2m+k)(1+4(m+3-i))$ is negative whenever $k^2-9k<18m$. So, by taking intersection of these we conclude that $d(M,i)$ is negative for any $|i| < m+2$ and $k^2-9k<18m-4$ 

\end{proof}

\begin{claim}
The $d$ invariant $d(M,i)$ is decreasing function for any $m+2 \leq|i|\leq 2m+2$. More importantly it assumes positive value for $i=m+2$.
\end{claim}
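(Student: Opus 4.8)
The plan is to reduce both assertions, on this range of $i$, to properties of the $d$-invariants of surgery on the unknot. By Lemma~\ref{lemma_torsion_coefficient} the torsion coefficients satisfy $t_i(K)=0$ for every $i\geq m+2$, so for $|i|\geq m+2$ the correction term drops out of the surgery formula and, writing $n=2m+k$,
\[
 d(M,i)=d(U_n,i)=\frac{(n-2|i|)^2}{4n}-\frac14 .
\]
Hence on the whole range $|i|\geq m+2$ the behaviour of $d(M,i)$ is dictated by the single elementary function $x\mapsto(n-2x)^2$, and both claims become statements about it.

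First I would prove the monotonicity. By the symmetry $d(M,i)=d(M,-i)$ it suffices to treat $i\geq0$, and the consecutive difference computes to
\[
 d(M,i+1)-d(M,i)=\frac{(n-2i-2)^2-(n-2i)^2}{4n}=\frac{1-(n-2i)}{n},
\]
which is strictly negative whenever $n-2i>1$. Every $\mathrm{Spin}^c$ structure on $M$ has a representative with $|i|\leq n/2$, and the hypothesis $k^2-9k<4m-17$ forces $k<2m+4$, so $n/2=m+k/2<2m+2$. Thus on the entire admissible range $m+2\leq|i|\leq\lfloor n/2\rfloor$, which lies inside $[m+2,2m+2]$, one has $n-2i\geq2>1$ at each step, so $d(M,i)$ is strictly decreasing; in particular its maximum over this range is attained at $i=m+2$.

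Next I would evaluate the endpoint. Since $n-2(m+2)=k-4$,
\[
 d(M,m+2)=\frac{(k-4)^2}{4(2m+k)}-\frac14 ,
\]
which is positive exactly when $(k-4)^2>2m+k$. Combined with Claim~\ref{claim1}, which forces $d(M,i)<0$ for all $|i|<m+2$, this pins the global maximum of the $d$-invariants of $M$ at $i=m+2$ and shows that maximum is genuinely positive; recording positivity matters because it guarantees that the eventual failure of the Owens--Strle inequality in Theorem~\ref{theorem_owens_strle} is a real phenomenon rather than a vacuous consequence of all $d$-invariants being negative. As a check, for $P(-2,3,7)$ with $m=3$ and $k=9$ the formula gives $d(M_{15,3},5)=\tfrac{25}{60}-\tfrac14=\tfrac16>0$, matching the table above.

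The step I expect to be the main obstacle is precisely this positivity. Unlike the monotonicity, it is not formal: it rests on the arithmetic inequality $(k-4)^2>2m+k$, equivalently $k^2-9k>2m-16$, which holds only once $k$ is large enough relative to $m$ (roughly $k>\tfrac12(9+\sqrt{8m+17})$). One must therefore argue that the surgery slopes for which positivity holds are exactly the binding ones inside the interval controlled by $k^2-9k<4m-17$, while for the smaller values of $k$ the maximum is already $\leq0$ and Theorem~\ref{theorem_owens_strle} fails trivially. Keeping track of this threshold, together with the parity of $n=2m+k$ that selects the right-hand side of Theorem~\ref{theorem_owens_strle}, is where the delicate bookkeeping feeding into Lemma~\ref{boundlemma} will lie.
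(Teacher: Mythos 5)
Your argument follows essentially the same route as the paper's: the entire published proof of this claim is the one-line observation that $t_i(K)=0$ for $i\geq g(K)=m+2$, so that $d(M,i)=d(U_n,i)$ on this range, and your explicit difference computation of the monotonicity of $\frac{(n-2|i|)^2}{4n}-\frac14$ on $|i|\leq n/2$ is a correct fleshing-out of that. Where you add genuine value is in the positivity at $i=m+2$, and your caution there is warranted: positivity is equivalent to $(k-4)^2>2m+k$, which does \emph{not} follow from the vanishing of the torsion coefficients and is in fact false for small $k$ (for instance $k=4$, or $k=5$ with $m$ large, satisfies the hypothesis $k^2-9k<4m-17$ of Lemma~\ref{boundlemma} and makes $i=m+2$ an admissible $Spin^c$ label, yet gives $d(M,m+2)\leq -\frac14+\frac{1}{4(2m+5)}<0$). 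So the claim as literally stated does not hold for all admissible slopes, and the paper's proof does not address this point. As you correctly observe, the lapse is harmless for the downstream argument: when $(k-4)^2\leq 2m+k$, Claim~\ref{claim1} together with the monotonicity shows every $d$-invariant of $M$ is negative, so the maximum of $4d$ is negative and the inequality of Theorem~\ref{theorem_owens_strle} fails a fortiori; while for the extremal $k$ with $k^2-9k$ near $4m-17$ one has $(k-4)^2-(2m+k)\approx 2m>0$, so the maximum really does sit at $i=m+2$ and the estimate in the proof of Lemma~\ref{boundlemma} is the binding one. Your proposal is thus not only correct but more careful than the source.
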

\begin{proof}
The proof is straightforward from the fact that the torsion coefficients $t_i(K)=0$ for any $i\geq g(K)=m+2$. 
\end{proof}
\begin{proof}[Proof of Lemma~\ref{boundlemma}]
Now combining claims above, we conclude that $d(M,i)$ attains its maximum value at $i=m+2$ and it is calculated as $d(M_{2m+k,m},m+2)=\frac{(2m+k-2m-4)^2-2m-k}{4(2m+k)}$. Now for any integer $N$ in the interval $[2m+3,2m+k]$ with $k^2-9k<4m-17$ (note such $k$'s satisfies $k^2-9k<18m-4$), it is easy to see that $4d(M_{2m+k,m},m+2)<\frac{2m+k-1}{2m+k}$ when $k$ is odd and $4d(M_{2m+k,m},m+2)<1$ when $k$ is even. The proof follows. 
\end{proof}

\begin{proof}[Proof of Theorem~\ref{main_theorem}]
 The fillability of $M_{r,3}$ and existence of tight contact structures on $M_{r,m}$ are already proved above. So, we are left to prove part $(2)$ of the Theorem~\ref{main_theorem}. Note that in Lemma~\ref{boundlemma}, we found that for each integer $N$ in the interval $[2m+3, 2m+\frac{\sqrt{16m+13}+9}{2})$, the $d$ invariants of the corresponding surgery manifold $M_{N,m}$ fails to hold the inequality in Theorem~\ref{theorem_owens_strle}. In particular the same holds if there is a squarefree integer in the interval that Lemma~\ref{boundlemma} provides. There is much research done about the existence of a squarefree integer in a given interval. In particular work of Halberstam and Roth in \cite{HB1951} ({\it cf}~ \cite[Theorem~$1.1$]{FGT2015}) guarantees that for $m$ sufficiently large there is always a squarefree integer, call it $s$, in the interval $(2m+3, 2m+\frac{\sqrt{16m+13}+9}{2})$. Combining this with Theorem~\ref{theorem_owens_strleNew}, we obtain that the desired conclusion of part $(2)$ of Theorem~\ref{main_theorem}.

\end{proof}

\bibliographystyle{amsplain}

\end{document}